\newtheorem{thm}{Theorem}[section]
\newtheorem{prop}[thm]{Proposition}
\definecolor{halfgray}{gray}{0.55}
\definecolor{webgreen}{rgb}{0,.5,0}
\definecolor{webbrown}{rgb}{.6,0,0}
\definecolor{Maroon}{cmyk}{0, 0.87, 0.68, 0.32}
\definecolor{royalblue}{cmyk}{1, 0.50, 0, 0}
\definecolor{Black}{cmyk}{0, 0, 0, 0}
\numberwithin{equation}{section}
\newcommand{\Var}{\mathrm{Var}}
\renewcommand{\P}{\mathbb{P}}
\newcommand{\indic}{\mathbbm{1}}
\newcommand{\db}{\overline d}
\newcommand{\E}{\mathbb{E}}
\newcommand{\e}{\mathrm{e}}
\renewcommand{\P}{\mathbb{P}}
\DeclareMathOperator*{\essinf}{ess\,inf}
\newcommand{\ER}{Erd\H{o}s-R\'enyi  \ }
\newcommand{\eqn}[1]{\begin{equation}#1\end{equation}}
\newcommand{\eqan}[1]{\begin{align}#1\end{align}}
\newcommand{\nn}{\nonumber}
\begin{document}

\title{Connectivity of Poissonian Inhomogeneous random Multigraphs}
\author{Lorenzo Federico}

\begin{abstract}

We introduce a new way to sample inhomogeneous random graphs designed to have a lot of flexibility in the assignment of the degree sequence and the individual edge probabilities while remaining tractable. To achieve this we run a Poisson point process over the square $[0,1]^2$, with an intensity proportional to a kernel $W(x,y)$ and identify every couple of vertices of the graph with a subset of the square, adding an edge between them if there is a point in such subset.  This ensures unconditional independence among edges and makes many statements much easier to prove in this setting than in other similar models. Here we prove sharpness of the connectivity threshold under mild integrability conditions on $W(x,y)$.
\medskip

\noindent{\sc Keywords:} \textit{graphons, connectivity threshold, inhomogeneous random graphs}\\
\textit{MSC 2010: 05C40, 05C80, 60C05} 

\end{abstract}
\maketitle

\section{Introduction and model description}\label{sect:model}

In this paper we introduce a new model to generate inhomogeneous random multigraphs on $n$ vertices in which edges are sampled independently according to two parameters:
\begin{itemize}
\item A sequence $(t_n)_{n \geq 2}$ that controls the expected total number of edges in the multigraph.
\item A symmetric kernel  $W(x,y) : [0,1]^2 \to \mathbb R_{\geq 0}$ that indicates which edges have higher probability to be present.
\end{itemize}

%\section{The model}

%We now describe a way to sample a sequence of inhomogeneous random multigraphs $G_n(W,t)$ on $n$ vertices  starting from a symmetric kernel $W(x,y) : [0,1]^2 \to \mathbb R_{\geq 0}$ and a sequence of positive real numbers $(t_n)_{n\geq 2}$.

For every $n\geq 2$ we define the vertex set of the graph $G_n(W,t)$ as $V_n:=\{v_i;i\in [n]\}$ and for every $i \in [n]$ we define the interval $S_i:=((i-1)/n,i/n]$. We run a Poisson point process over the square $[0,1]$ with intensity $t_n W(x,y)$ and add an edge between $ v_i$ and $v_j$, $i\leq j$, for every point in the square $S_i\times S_j$.

This is equivalent to adding between any couple of vertices $\{v_i, v_j\}$, $i\leq j$, a number of edges distributed as a Poisson random variable, whose parameter $\lambda_{ij}$ is given by

\begin{equation}
\lambda_{ij}:=\int_{S_i\times S_j}t_n W(x,y)dxdy,
\end{equation}
independent of each other.
We also define the random graph $\tilde G_n(W,t)$ obtained from the multigraph $G_n(W,t)$ by erasing the multiedges and self-loops. In $\tilde G_n(W,t)$ every edge $\{v_i,v_j\}$ is present with probability
\begin{equation}\label{eq:probedge}
p_{ij}:=1- \exp\Big\{ -\int_{S_i\times S_j}t_n W(x,y)dxdy \Big\},
\end{equation}
independent of the others. It is straightforward to see that $\tilde G_n(W,t)$ is connected if and only if $G_n(W,t)$ is connected. If $W(x,y) \in \mathbb L_1([0,1]^2)$, then $t_n \lVert W\rVert _1/2$ is the expected number of edges in $G_n(W,t)$, so tuning opportunely the sequence $(t_n)_{n \geq 2}$ we can use this procedure to generate multigraphs with any given density of edges. 
For a given constant $c$, taking $t_n=cn^2$ results in a dense graph, while taking $t_n=cn$ we obtain a sparse graph with finite average degree. Note that $\tilde G_n(W,t)$ in some special cases is asymptotically equivalent to many well-known models, such as the \ER random graph \cite{ErdRen59}, if the kernel $W(x,y)$ is constant, the Norros-Reittu random graph \cite{NorRei06}, if  $W(x,y)=f(x)f(y)$ for some function $f:[0,1]\to \mathbb R_{\geq 0}$, and the stochastic block model \cite{HolLasLei83}, if the kernel is piecewise constant. Note however that there are also several popular models that cannot be expressed in terms of a sequence $\tilde G_n(W,t)$ for any kernel $W$, such as percolation on sparse graphs or random intersection graphs (see \cite{ErdSpe79,FilSchSin00, Joos13,  Ryb11} for some connectivity results about those models).

This model is also closely related to the general inhomogeneous random graph model defined by Bollob\'as, Janson and Riordan in  \cite{BolJanRio07} which is also defined by a kernel $W(x,y)$, but is sampled by placing $n$ points at random iid positions $(x_i)_{i\in [n]}$ on the interval $[0,1]$ and then adding the edge $\{v_i,v_j\}$ to the graph with a probability that is given by some function of $W(x_i,x_j)$. In such setting the connectivity threshold was computed in \cite{DevFra14}, finding similar results to those we present here, under stricter conditions on the kernel. The main advantage of our definition of the graph is that, since we identify the vertices with the deterministic intervals $(S_i)_{ i \in [n]}$ instead of the random positions $x_i$, the edges are present independently of each other unconditionally and not only given the positions $(x_i)_{i \in [n]}$. We see in the proof of the main theorem of this paper how this property makes many arguments much easier and sometimes allows for a completely different approach.

The fact that we are sampling our graphs from a kernel $W(x,y):[0,1]^2\mapsto \mathbb R_+$ suggests that this model might converge to some graphon, in the sense described in \cite{BorChaLovSosVes08}, in the dense regime (i.e. when $t_n=cn^2$). This is the case, with the limit graphon given by $1-\e^{-cW(x,y)}$, as indicated by \eqref{eq:probedge}.

\section{The main theorem}

In this section we formulate  the main theorem of this paper, about the connectivity threshold of the inhomogeneous multigraph we described, and discuss the conditions required to prove it.

We take $t_n=c n\log n$ for some $c \geq 0$.
We define

\begin{equation}
H(x):=\int_{[0,1]} W(x,y)dy,
\quad
\nu_{0}:=\essinf_{[0,1]} H(x).
\end{equation}
%Moreover we define for a given $q$
%
%\begin{equation}
%\nu_q(x):=\Big( \int_{[0,1]} W(x,y)^q dy\Big)^{1/q}
%\end{equation}
%and assume that there exists a $q >1$ such that $\nu_q (x)\in \mathbb L_\infty([0,1])$.
We also require the kernel $W(x,y)$ to be irreducible, which means that there is no set $A\subset [0,1]$ such that $0<\mu(A)<1$ and $\int_{A \times A^c}W(x,y)dxdy=0$.
What follows is the main theorem of the present paper:

\begin{thm}\label{thm:main}
Consider a sequence of graphs $G_n(W,t)$, with $t_n= c n \log n$ and $W(x,y)$ irreducible.

%$\nu_q (x)\in \mathbb L_\infty([0,1])$
If $c<1/\nu_0$ and $H(x) \in \mathbb L_1^{loc}(F)$ for some open set $F\subseteq [0,1]$  such that $\mu(F)=1$, then 

\eqn{
\lim_{n \to \infty} \P(G_n(W,t) \text{ is connected})=0.
}

If $c>1/\nu_0$ and $W(x,y) \in \mathbb L_q([0,1]^2)$ for some $q>2$, then

\begin{equation}
\lim_{n \to \infty} \P(G_n(W,t) \text{ is connected})= 1.
\end{equation}
\end{thm}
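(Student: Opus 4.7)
Both directions rest on first/second moment analysis of the number $X_n$ of isolated vertices, exploiting the unconditional edge independence of the Poisson construction. By construction,
\begin{equation*}
\P(v_i\text{ isolated})=\exp\Big(-\sum_{j\neq i}\lambda_{ij}\Big)=\exp\Big(-t_n\int_{S_i}H(x)\,dx+t_n\int_{S_i\times S_i}W\Big),
\end{equation*}
and with $t_n=cn\log n$ this is comparable to $n^{-c\bar H_i}$, where $\bar H_i:=n\int_{S_i}H$. The transition at $c=1/\nu_0$ therefore corresponds to isolated vertices appearing/disappearing at the essinf level of $H$.

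\textbf{Subcritical direction.} Fix $\epsilon>0$ with $c(\nu_0+\epsilon)<1$. Since $\{x\in F:H(x)<\nu_0+\epsilon\}$ has positive Lebesgue measure and $H\in\mathbb L_1^{loc}(F)$, the Lebesgue differentiation theorem produces $\Theta(n)$ ``good'' indices $i$ with $\bar H_i\leq\nu_0+2\epsilon$, yielding $\E[X_n]\gtrsim n^{1-c(\nu_0+2\epsilon)}\to\infty$. By edge independence one has the exact pair identity
\begin{equation*}
\P(v_i,v_j\text{ both isolated})=\P(v_i\text{ isolated})\P(v_j\text{ isolated})\,e^{\lambda_{ij}}.
\end{equation*}
A Lusin-type restriction to a compact $K\subset F$ on which $H$ is bounded, combined with an analogous refinement for $W$ on $K\times K$, lets me pass to a sub-count $X_n^K$ for which $\lambda_{ij}=o(1)$ uniformly on the contributing pairs, so $\Var(X_n^K)=o(\E[X_n^K]^2)$. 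Chebyshev then gives $X_n^K\geq 1$ whp and hence disconnection.

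\textbf{Supercritical direction.} The bound $H\geq\nu_0$ a.e., together with the H\"older estimate $t_n\int_{S_i\times S_i}W\leq c\log n\,\|W\|_q n^{-1+2/q}=o(1)$ (which requires $q>2$), gives $\E[X_n]\leq n^{1-c\nu_0+o(1)}\to 0$; thus no isolated vertex exists whp. To rule out larger putative components I union-bound over cuts: for $A\subset[n]$ with $|A|=k$ and $T_A:=\bigcup_{i\in A}S_i$ of measure $s=k/n$,
\begin{equation*}
\P(A\text{ is a union of components})\leq\exp(-t_n\Psi(T_A)),\qquad\Psi(T):=\int_{T\times T^c}W=\int_T H-\int_{T\times T}W\geq \nu_0 s-\|W\|_q s^{2-2/q}.
\end{equation*}
For $s\leq\eta$ small, $\Psi(T_A)\geq(1-o(1))\nu_0 s$ and $\sum_k\binom{n}{k}\exp(-c\nu_0 k\log n(1-o(1)))=o(1)$ since $c\nu_0>1$. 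For $s\in[\eta,1/2]$, irreducibility combined with a weak-$*$ compactness argument (made possible by $W\in\mathbb L_q$ with $q>2$ pairing against $L^{q'}$) yields $\Phi_*:=\inf\{\Psi(T):\mu(T)\in[\eta,1/2]\}>0$, so the contribution is at most $2^n\exp(-c\Phi_* n\log n)=o(1)$.

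\textbf{Main obstacle.} The delicate step is turning the qualitative irreducibility hypothesis into the uniform cut bound $\Phi_*>0$ over the middle range of component sizes. The plan is to take any minimizing sequence of indicators $1_{T_m}$ with $\mu(T_m)\in[\eta,1/2]$, extract a weak-$*$ limit $g\in L^\infty([0,1];[0,1])$ with $\int g\in[\eta,1/2]$, use $W\in\mathbb L_q$ ($q>2$) to pass to the limit in $\int W(x,y)\,1_{T_m}(x)(1-1_{T_m}(y))\,dxdy$, and argue that $\int W(x,y)\,g(x)(1-g(y))\,dxdy=0$ together with $W\geq0$ forces $g\in\{0,1\}$ a.e.\ on a set whose characteristic function violates irreducibility. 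The subcritical counterpart poses the analogous but milder difficulty of controlling the pair correlations $\lambda_{ij}$ with only the $\mathbb L_1^{loc}$ hypothesis, which is why the Lusin reduction is needed there.
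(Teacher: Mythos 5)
Your overall architecture coincides with the paper's: a first/second moment analysis of the number of isolated vertices for the lower bound, and a union bound over cuts $A$ split into a small-cut regime (handled by H\"older's inequality with $q>2$) and a middle regime $\mu(B_A)\in[\eta,1/2]$ (handled by a uniform positive lower bound on $\int_{B_A\times B_A^c}W$) for the upper bound. The supercritical half is essentially correct. The one genuinely different ingredient there is the middle regime: the paper cites \cite[Lemma 7]{BolBorCha10} (plus a truncation footnote, since that lemma is stated for bounded kernels), whereas you give a direct weak-$*$ compactness argument. That argument does go through: if $1_{T_m}\rightharpoonup g$ weak-$*$ in $L^\infty$, then $1_{T_m}\otimes 1_{T_m}\rightharpoonup g\otimes g$ against $L^1([0,1]^2)$ test functions (check on products $\phi(x)\psi(y)$, then use density and the uniform $L^\infty$ bound), so $W\in\mathbb L_1$ already suffices to pass to the limit; from $\int W(x,y)g(x)(1-g(y))\,dxdy=0$ and $W\ge 0$ one gets $W=0$ a.e.\ on $\{g>0\}\times\{g<1\}$, and a short case analysis (either $\mu(\{0<g<1\})>0$ or $g\in\{0,1\}$ a.e.) produces a set of measure strictly between $0$ and $1$ contradicting irreducibility, using $\int g\in[\eta,1/2]$. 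This is a clean, self-contained substitute for the citation.

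The gap is in your subcritical variance bound. You want to pass to a sub-count $X_n^K$ for which $\lambda_{ij}=o(1)$ \emph{uniformly} over contributing pairs, via a Lusin-type refinement of $W$ on $K\times K$. But Lusin/Egorov hand you a small exceptional subset $E$ of the \emph{square}, and you would then need an index set $V$ of size $\Theta(n)$ such that $S_i\times S_j$ avoids (or barely meets) $E$ for \emph{every} pair $i\ne j\in V$. A small subset of $[0,1]^2$ need not be avoidable by any product set $B_V\times B_V$ of positive density (the bad pairs can be spread out like a sparse bipartite graph with no linear-size independent set), and even a tiny overlap of $S_i\times S_j$ with a region where $W$ is unbounded can make $\lambda_{ij}$ large; note also that the subcritical hypothesis gives no global integrability of $W$, only $H\in\mathbb L_1^{loc}(F)$. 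Fortunately the uniform bound is not needed: on the good index set $H$ is bounded, so $\sum_j\lambda_{ij}=O(\log n)$ for each good $i$, hence at most $O(\log n/\varepsilon)$ indices $j$ satisfy $\lambda_{ij}>\varepsilon$; for those pairs bound $\P(I_i)\P(I_j)(\e^{\lambda_{ij}}-1)\le\P(I_i\cap I_j)\le\P(I_i)$, so their total contribution is $O(\E[X_n^K]\log n)=o(\E[X_n^K]^2)$ since $\E[X_n^K]$ is polynomially large, while the remaining pairs contribute at most $\frac{\varepsilon}{1-\varepsilon}\E[X_n^K]^2$. This counting step is exactly how the paper's Theorem \ref{llniso} proceeds, and that theorem is stated for arbitrary independent-edge graphs with $\E[Y_n]\gg\log n$, with no regularity assumed on the edge probabilities. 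With that repair your subcritical argument closes.
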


In other words, under some integrability conditions, the graph is connected whp if there are no vertices with an expected degree lower than $\log n$ and if there are not two sets of vertices which are deterministically separated.
We divide the proof in several steps. First we analyze the threshold for the existence of isolated vertices and prove that it coincides with what we claim to be the connectivity threshold. Then, we prove that when $c > \nu_0$ the graph is actually connected, providing two separate arguments for the non existence whp of small and large components.

Note that for the upper bound to hold we require the condition $W(x,y)\in \mathbb L_q([0,1]^2)$, which might seem counterintuitive since in most connectivity proofs (see \cite{DevFra14,ErdRen59,FedHof16}) the most important role is played by the vertices of low degree, while the vertices of high degree are almost irrelevant, since they tend to be always part of the giant component. This condition is necessary because a vertex might have a high expected degree just because it is given a very large number of self loops or multiple edges, which do not actually contribute to connectivity. The $\mathbb L_q$ condition is required to ensure that this effect is not too drastic. It is easy to see, by stochastic domination, that if a kernel $W(x,y)\notin \mathbb L_q$ can be lower bounded by another kernel $W'(x,y)\in \mathbb L_q$ such that $G_n(W',t)$ satisfies the conditions we require for it to be connected whp, then also $G_n(W,t)$ is connected whp.

In this paper we do not discuss what happens if $c=1/\nu_0$ or more in general if $t_n/(n \log n) \to 1/\nu_0$, because in that regime the asymptotic probability of connectivity of the graph behaves differently based on the specific shape of the kernel $W$ and it is hard to give general formulas stated in term of relatively easy and natural conditions.

\section{Connection Probabilities}\label{sect:connp}
We first give a simple formula for the probability that a given set of vertices in $G_n(W,t)$ has no outgoing edges. 
For every $A\subset [n]$ we define the set $B_A \subset [0,1]$ as $B_A:=\bigcup_{v_i \in A} S_i$,

and the event $C_A$ as the event that all the edges between $A$ and $A^c$ are vacant, i.e., that $A$ is the union of connected components. This is a crucial notion for the present paper, as the graph $G_n(W,t)$ is connected if and only if there is no proper subset $A$ of $[n]$ such that the event $C_A$ happens. 
Thus, we need a compact formula for the probability of $C_A$. Define the set $[0,1]^2_x:=\{(x,y) \in [0,1]^2: x<y\}$. By the definition of $G_n(W,t)$  we  write

%\eqan{\label{eq:PCA}
%\P(C_A)&=\exp\Big\{- t_n \int_{B_A \times B_A^c}W(x,y)dxdy\Big\}\\&=\exp\Big\{- t_n \int_{B_A }\Big(H(x)-\int_{B_A} W(x,y)dy\Big)dx\Big\}.\nn
%}

\eqan{\label{eq:PCA}
\P(C_A)&=\exp\Big\{- t_n \int_{(B_A \times B_A^c\cup B_A^c\times B_A)\cap [0,1]^2_x}W(x,y)dxdy\Big\}\nn
\\&=\exp\Big\{- t_n \int_{B_A \times B_A^c}W(x,y)dxdy\Big\}
\\&=\exp\Big\{- t_n \int_{B_A }H(x)dx-\int_{B_A\times B_A} W(x,y)dydx\Big\},\nn
}
where in the second equality we used the symmetry of $W(x,y)$. 

\section{The lower bound: the number of isolated vertices}

As in most connectivity proofs, the relevant parameter for connectivity of the graph is the number $Y_n$ of isolated vertices. We first prove that the limit behavior of $Y_n$ is mainly determined by its expectation in a much more general setting, requiring only that edges are sampled independently, without assuming that the edge probabilities are defined using $W(x,y)$ and $t_n$. Then we compute bounds on $\E[Y_n]$ in the specific case of $G_n(W,t)$.

\subsection{Law of large number for isolated vertices}
We first define a more general inhomogeneous random graph in which edges are sampled independently but we do not ask for any regularity on the edge probabilities $p_{ij}$.
Given a number $n$ and an array $\mathbf P=(p_{ij})_{i<j\leq n}  \in [0,1]^{{[n] \choose 2}}$, we define the random graph $G_n( \mathbf P)$ with vertex set
$\{v_i, i \in [n]\}$, in which each edge $e_{ij}:=\{v_i, v_j\}$ is present with probability $p_{ij}$ independent of the others.

We will prove that the existence of isolated points in $G_n( \mathbf P)$ is regulated mainly by the first moment of their number. This result can be deduced from the main theorem of \cite{FalLarMar16}  with some effort, seeing the edge addition process as a coupon collector over the vertices, but we provide here a short and direct proof to improve readability of the paper.

The graph  $\tilde G_n(W,t)$ is a special case of $G_n(\mathbf P)$ in which the probabilities $p_{ij}$ are defined by \eqref{eq:probedge}, moreover, every vertex is isolated in $\tilde G_n(W,t)$ if and only if it is isolated in $G_n(W,t)$ (for our purposes we consider vertices with only self loops as isolated), so the following theorem can be applied to both models.
Define $Y_n$ as the number of isolated vertices in $G_n(\mathbf P)$. We prove the following result about the concentration of the number of isolated points:
\begin{thm}\label{llniso}
Consider a sequence of random graphs $G_n(\mathbf P)$ such that $\E[Y_n]/\log n \to \infty$,
then 
\eqn{
\Var(Y_n)/\E[Y_n]^2 \to 0.
}
\proof

We write, defining the event $I_i=\{v_i$ is isolated$\}$ for every $i \in [n]$,

\eqan{
\Var(Y_n) &= \E[Y_n^2]-\E[Y_n]^2= \sum_{i,j}\P(I_i \cap I_j)-\sum_{i,j} \P(I_i)\P(I_j)\nn\\
&=\sum_{i,j}\P(I_i)\P(I_j\mid I_i)-\sum_{i,j} \P(I_i)\P(I_j)= \sum_i \P(I_i)\sum_j \big(\P(I_j\mid I_i)-\P(I_j)\big)
}
We take care of the elements of the sum such that $i=j$ with the following upper bound:
\eqan{
 \sum_i \P(I_i)\big(\P(I_i\mid I_i)-\P(I_i)\big)= \sum_i \P(I_i)(1-\P(I_i))\leq\sum_i \P(I_i)= \E[Y_n]
}
If $i\neq j$ we note that $\P(I_j\mid I_i)=\P(I_j)/(1-p_{ij})$, so that we can rewrite

\eqn{
\P(I_j\mid I_i)-\P(I_j)=\dfrac{\P(I_j)}{1-p_{ij}}-\P(I_j)=\P(I_j)\dfrac{p_{ij}}{1-p_{ij}}.
}
We thus obtain

\eqan{
\Var(Y_n)\leq \E[Y_n]+\sum_{i}  \P(I_i)\sum_{j\neq i} \P(I_j)\dfrac{p_{ij}}{1-p_{ij}}.
}
Define the expected degree of the vertex $v_i$ as $\db_i=\sum_{j\neq i}p_{ij}$. To take care of the vertices $v_i$ such that $\db_i \geq 3 \log n$, we obtain

\eqan{
 \sum_{i: \db_i\geq 3 \log n}  \P(I_i)\sum_{j\neq i} \P(I_j)\dfrac{p_{ij}}{1-p_{ij}}&\leq \sum_{i: \db_i\geq 3 \log n}  \Big(1-\dfrac{\db_i}{n-1}\Big)^{n-1}\sum_{j\neq i} \P(I_j)\dfrac{p_{ij}}{1-p_{ij}}\nn\\
&\leq \e^{-3 \log n(1+o(1))}n^2 \to 0,\label{eq:isodeg}
}
using that for every $i,j$ 

\eqn{\label{eq:condbound}
 \P(I_j)\dfrac{p_{ij}}{1-p_{ij}}=p_{ij}\prod_{h\neq i,j}(1-p_{jh})\leq 1.
}
To control the vertices such that $\db_i< 3 \log n$ instead, we write, for an arbitrary $\varepsilon >0$,

\eqan{\label{eq:isoleq}
\sum_{i: \db_i< 3 \log n}  \P(I_i)\sum_{j\neq i} \P(I_j)\dfrac{p_{ij}}{1-p_{ij}}=\sum_{i: \db_i< 3 \log n}  \P(I_i)\Big(\sum_{j: p_{ij}\leq \varepsilon} \P(I_j)\dfrac{p_{ij}}{1-p_{ij}}+\sum_{j: p_{ij}> \varepsilon} \P(I_j)\dfrac{p_{ij}}{1-p_{ij}}\Big).
}
We again bound

\eqn{
\sum_{i: \db_i< 3 \log n}  \P(I_i)\sum_{j: p_{ij}\leq \varepsilon} \P(I_j)\dfrac{p_{ij}}{1-p_{ij}}\leq \sum_{i}  \P(I_i)\sum_j \P(I_j) \dfrac{\varepsilon}{1-\varepsilon}=\E[Y_n]^2\dfrac{\varepsilon}{1-\varepsilon}.
}
On the other hand, if $\db_i \leq 3 \log n$, then there are at most $(3/\varepsilon)\log n$ distinct $j$s such that $p_{ij}> \varepsilon$, so, using again  \eqref{eq:condbound},
\eqan{\label{eq:isovep}
\sum_{i: \db_i< 3 \log n}  \P(I_i)\sum_{j: p_{ij}> \varepsilon} \P(I_j)\dfrac{p_{ij}}{1-p_{ij}}&\leq \sum_{i: \db_i< 3 \log n}  \P(I_i)\sum_{j} \indic_{\{p_{ij}> \varepsilon\}}\leq \E[Y_n] (3/\varepsilon)\log n.
}
Consequently, summing \eqref{eq:isodeg}, \eqref{eq:isoleq} and \eqref{eq:isovep}, we obtain that for every $\varepsilon > 0$
\eqn{
\Var(Y_n) \leq \E[Y_n]+\E[Y_n]^2\dfrac{\varepsilon}{1-\varepsilon}+ \E[Y_n](3/\varepsilon)\log n,
}
so that, since we assumed $\E[Y_n]/\log n \to \infty$,
\eqn{
\limsup_{n \to \infty}\Var(Y_n)/\E[Y_n]^2 \leq \dfrac{\varepsilon}{1-\varepsilon},
}
from the fact that $\varepsilon$ is arbitrary the claim follows. \qed
\end{thm}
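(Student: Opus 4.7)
The plan is a standard second-moment method for $Y_n=\sum_i \indic_{I_i}$, aiming at $\Var(Y_n)=o(\E[Y_n]^2)$. Edge independence makes the pairwise correlation completely explicit: conditionally on $I_i$ (every edge at $v_i$ absent) the event $I_j$ for $j\neq i$ only asks for the remaining edges at $v_j$ to be absent, so $\P(I_j\mid I_i)=\P(I_j)/(1-p_{ij})$. Splitting off the diagonal (which contributes at most $\E[Y_n]$) therefore gives
\[
\Var(Y_n) \;\leq\; \E[Y_n] \;+\; \sum_{i\neq j}\P(I_i)\,\P(I_j)\,\frac{p_{ij}}{1-p_{ij}}.
\]

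The whole difficulty lies in the non-uniform factor $p_{ij}/(1-p_{ij})$, which blows up when $p_{ij}\to 1$. I would decompose the outer sum over $i$ according to the expected degree $\db_i=\sum_{j\neq i}p_{ij}$, using a threshold of $3\log n$. For \emph{high-degree} $i$, the bound $\P(I_i)\leq\e^{-\db_i}\leq n^{-3}$ together with the simple observation
\[
\P(I_j)\frac{p_{ij}}{1-p_{ij}} \;=\; p_{ij}\prod_{h\neq i,j}(1-p_{jh}) \;\leq\; 1
\]
controls these terms by $n\cdot n^{-3}\cdot n = o(1)$. For \emph{low-degree} $i$, I would further split the inner sum over $j$ by a small threshold $\varepsilon>0$ on $p_{ij}$: for $p_{ij}\leq\varepsilon$ the factor $p_{ij}/(1-p_{ij})$ is $\leq \varepsilon/(1-\varepsilon)$, so this portion is bounded by $\varepsilon/(1-\varepsilon)\cdot\E[Y_n]^2$; for $p_{ij}>\varepsilon$ there are at most $\db_i/\varepsilon\leq (3/\varepsilon)\log n$ such $j$, and applying the boxed $\leq 1$ bound above yields a contribution of $(3/\varepsilon)(\log n)\E[Y_n]$.

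Summing the pieces gives
\[
\Var(Y_n) \;\leq\; \E[Y_n] \;+\; \frac{\varepsilon}{1-\varepsilon}\E[Y_n]^2 \;+\; \frac{3}{\varepsilon}(\log n)\E[Y_n] \;+\; o(1),
\]
and the hypothesis $\E[Y_n]/\log n\to\infty$ forces $\limsup \Var(Y_n)/\E[Y_n]^2 \leq \varepsilon/(1-\varepsilon)$; letting $\varepsilon\downarrow 0$ finishes the argument. The main obstacle is the unboundedness of $p_{ij}/(1-p_{ij})$: one must exploit that at most logarithmically many $p_{ij}$ can be bounded away from $0$ when $\db_i$ is small, while vertices with very large expected degree are so unlikely to be isolated that they are invisible in the second moment. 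The calibration $3\log n$ is chosen exactly so that $\e^{-\db_i}$ beats the trivial $n^2$ count of pairs.
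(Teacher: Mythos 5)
Your proposal is correct and follows essentially the same route as the paper: the same exact conditional formula $\P(I_j\mid I_i)=\P(I_j)/(1-p_{ij})$, the same split at $\db_i=3\log n$, and the same $\varepsilon$-threshold decomposition of the inner sum. The only (harmless) cosmetic difference is that you bound $\P(I_i)\leq\e^{-\db_i}$ directly via $1-x\leq\e^{-x}$, whereas the paper uses $(1-\db_i/(n-1))^{n-1}$; both give the needed $n^{-3(1+o(1))}$.
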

\subsection{The expected number of isolated vertices} 
We now study the asymptotic of $\E[Y_n]$ for $G_n(W,t)$, to prove that the threshold for the existence of isolated vertices is indeed the claimed threshold for connectivity in Theorem \ref{thm:main}. 
% We compute
%
%\begin{equation}
%\P(I_i) =\exp\Big\{ - \int_{[(i-1)/n,i/n]}t_nH(x)dx \Big\},
%\end{equation}
%so that
%\begin{equation}
%\max_i \P(I_i)\leq \exp\Big\{ -t_n \nu_0/n \Big\}=n^{-c\nu_0},
%\end{equation}
%consequently  if $c \geq 1/\nu_0$, then
%
%\begin{equation}
%\E[Y_n]\leq n \max_i \P(I_i) \leq n^{1-c\nu_0} \to 0.
%\end{equation}
%We conclude by first moment method the proof that when $c \geq 1/\nu_0$ whp there are no isolated vertices in $G_n(W,t)$.

We prove that when $c< 1/\nu_0$, $\E[Y_n]\gg \log n$, so that we can apply Theorem \ref{llniso}. If $c < 1/\nu_0$, then for some $\varepsilon >0$, there exists a set $A\subseteq [0,1]$ such that 

\begin{equation}\mu(A)>\varepsilon; \quad \sup_{x \in A}t_nH(x)< (1-\varepsilon ) n\log n.
\end{equation}

We next define the sequence of functions $H_n(x)$ as

\begin{equation}
H_n(x)=n\int_{[\lfloor xn\rfloor/n,\lceil xn \rceil/n]}H(x)dx.
\end{equation}
Note that $H_n(x)$ is constant over the intervals $((i-1)/n, i/n)$ and is not properly defined for $x=i/n$ for some $i$. To solve this issue we extend $H_n(x)$ so that it is left continuous. Since for every $n$, $\mu(\{i/n; i\in [n]\})=0$, this choice does not impact any of the following arguments. 
We recall that, for every vertex $v_i$,  $I_i=C(\{i\})$. By \eqref{eq:probedge}, for every $x \in S_i$, recalling \eqref{eq:PCA},

\eqan{
\P(I_i)\geq \exp\Big\{-t_n \int_{S_i}H(x)dx\Big\}   \geq \e^{-t_nH_n(x)/n}.\label{eq:hnprob}
}
We assumed the existence of an open set $F \subseteq [0,1]$ such that $\mu(F)=1$ and $H(x) \in \mathbb L_1^{loc}(F)$. By Lebesgue's differentiation theorem, we have that $H_n(x) \to H(x)$ almost everywhere in $F$ and thus almost everywhere in $[0,1]$. Consequently, by Egorov's theorem, there exist a set $B$ such that $\mu(B)< \varepsilon /2$ and a number $m$ such that, for every $n>m$,

\begin{equation}
\sup_{[0,1]\setminus B}| H_n(x)-H(x)|< \varepsilon/2.
\end{equation}
Consequently,

\begin{equation}
\mu(A\setminus B)\geq \varepsilon /2; \quad \sup_{A\setminus B}t_nH_n(x)< (1-\varepsilon/2) n\log n.
\end{equation}

We define the set $M_n(\varepsilon):=\{x: t_nH_n(x)< (1-\varepsilon/2) n\log n \}$. We know that for every $n>m$, $\mu(M_n(\varepsilon))> \varepsilon /2$, and that $M_n(\varepsilon)$ is the disjoint union of intervals of the form $((i-1)/n, i/n]$. We write 
\eqn{
V_n(\varepsilon):=\{ i \in [n]: ((i-1)/n, i/n]\subseteq M_n(\varepsilon)\},
}
for every $n>m$. Using \eqref{eq:hnprob}, we obtain

\eqn{
|V_n(\varepsilon)|>n \varepsilon/2; \qquad \min_{i \in V_n(\varepsilon)} \P(I_i) \geq n^{1-\varepsilon/2}.
}
Thus, we conclude
\eqn{
\E[Y_n] \geq \sum_{i \in V_n(\varepsilon)} \P(I_i) \geq  \frac{n\varepsilon}{2} n^{1-\varepsilon/2}\gg \log n,
}
and consequently, using Theorem \ref{llniso} we obtain, by Chebyshev's inequality,

\eqn{\label{eq:noiso}
\P(Y_n=0)=\P(Y_n \leq 0) \leq \dfrac{\Var(Y_n)}{\E[Y_n]^2} \to 0.
}

\section{The Upper bound}
In this section we prove the upper bound on the connectivity threshold, that is, that when $c>1/\nu_0$ the graph is connected whp. The proof is divided in two steps, first we show that whp there are no small components and then that there are not multiple giant components.
\subsection{No small components}\label{sect:nosmall}

We next prove that if $c>1/\nu_0$, then exists an $\varepsilon>0$ such that whp every component has size at least $n\varepsilon$.

We can now prove that whp all the components of $G_n(W,t)$ are large:
\begin{prop}\label{prop:nosmall}
Consider a sequence of graphs $G_n(W,t)$, with an irreducible kernel $W(x,y)\in \mathbb L_q([0,1]^2)$ for some $q>2$ and $t_n= c n \log n$ with $c \nu_0>1$. Then there exists $\varepsilon >0$ such that 
\eqn{
\P\Big(\bigcup_{ A : |A|<\varepsilon n} C_A\Big) \to 0.
}
\end{prop}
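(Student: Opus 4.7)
The plan is a union bound over all $A \subset [n]$ with $1 \leq |A| = k < \varepsilon n$, with $\P(C_A)$ estimated via the explicit formula \eqref{eq:PCA}. Writing $\alpha := k/n$, we have $\mu(B_A) = \alpha$, and the isolation term satisfies $t_n \int_{B_A} H(x)\,dx \geq t_n \nu_0 \alpha = c\nu_0 k \log n$, since $H \geq \nu_0$ almost everywhere. If no compensating contribution from the ``self-interaction'' term $t_n\int_{B_A\times B_A}W$ were present, this alone would give $\P(C_A) \leq n^{-c\nu_0 k}$, which (since $c\nu_0 > 1$) beats the entropy $\binom{n}{k} \leq (en/k)^k$.

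The real task is therefore to upper-bound that correction term, for which I would invoke H\"older's inequality with exponents $q$ and $q/(q-1)$:
\eqn{
\int_{B_A\times B_A}W(x,y)\,dxdy\;\leq\;\lVert W\rVert_q\,\mu(B_A\times B_A)^{1-1/q}\;=\;\lVert W\rVert_q\,\alpha^{2-2/q}.
}
Multiplying by $t_n$ gives $c\lVert W\rVert_q\,k\log n\cdot \alpha^{1-2/q}$. The key point is that $1-2/q>0$ precisely because $q>2$, so uniformly over $k\leq \varepsilon n$ this quantity is bounded by $\eta\,k\log n$ with $\eta:=c\lVert W\rVert_q\varepsilon^{1-2/q}$, which can be made arbitrarily small by shrinking $\varepsilon$. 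Combining the two bounds,
\eqn{
\P(C_A)\;\leq\;n^{-(c\nu_0-\eta)k}.
}

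Since $c\nu_0>1$, I can pick $\varepsilon>0$ small enough that $c\nu_0-\eta\geq 1+\delta$ for some $\delta>0$, and then
\eqn{
\P\Big(\bigcup_{1\leq |A|<\varepsilon n}C_A\Big)\;\leq\;\sum_{k=1}^{\lfloor\varepsilon n\rfloor}\binom{n}{k}\,n^{-(1+\delta)k}\;\leq\;\sum_{k\geq 1}\Big(\frac{e}{k}\Big)^k n^{-\delta k}\;=\;O(n^{-\delta})\;\to\;0.
}
The conceptual obstacle, and the reason the $\mathbb L_q$ hypothesis is present in Theorem~\ref{thm:main}, lies precisely in the $B_A\times B_A$ term: it captures Poisson intensity ``wasted'' on self-loops and multi-edges inside $A$, which do not help separate $A$ from its complement. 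Without some integrability on $W$, a concentration of mass inside $B_A\times B_A$ could cancel the isolation cost $t_n\nu_0\alpha$; H\"older with any exponent $q>2$ is exactly enough to make this correction sub-linear in $k$ and, after tuning $\varepsilon$, negligible compared to $c\nu_0 k\log n$.
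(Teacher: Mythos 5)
Your proof is correct and follows essentially the same route as the paper: a union bound with $\P(C_A)$ controlled via \eqref{eq:PCA}, the term $t_n\int_{B_A}H$ bounded below by $c\nu_0 k\log n$, and the self-interaction term $t_n\int_{B_A\times B_A}W$ killed by H\"older with exponent $q>2$ so that it is $O(\varepsilon^{1-2/q}k\log n)$. If anything, your version is slightly more carefully quantified than the paper's, which momentarily bounds $\mu(B_A)^{2-2/q}$ by $\varepsilon^{2-2/q}$ before comparing to $\varepsilon\frac{\nu_0-1/c}{2}$, whereas you keep the dependence on $\alpha=k/n$ explicit throughout.
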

\proof

We prove the claim using the union bound, that is, computing that 
\eqn{
\sum_{A:|A|\leq \varepsilon n}\P(C_A) \to 0.
}
We next bound for all $A$, recalling \eqref{eq:PCA} and using the H\"older's inequality,

\begin{equation}
\int_{B_A\times B_A} W(x,y) dydx\leq \mu(B_A)^{2-2/q}\Big(\int_{B_A\times B_A} W(x,y)^q dydx\Big)^{1/q},
\end{equation}
 for any $q>1$, so that, for every $B_A$ such that $\mu(B_A) \leq \varepsilon$,
\begin{equation}
\int_{B_A\times B_A} W(x,y) dydx\leq  \varepsilon^{2-2/q}\lVert W\rVert_q.
\end{equation}

So, choosing $\varepsilon$, $q$ such that $\varepsilon^{2-2/q}\lVert W\rVert_q< \varepsilon\frac{\nu_0-1/c}{2}$, which is possible because of the assumptions we made on $W(x,y)$ in Section \ref{sect:model}, we obtain

\eqan{
\P(C_A)&\leq\exp\Big\{-t_n \frac{|A|}{n}\Big(\nu_0-\frac{\nu_0-1/c}{2}\Big)\Big\}\leq \exp\Big\{ -t_n \int_{B_A }\frac{\nu_0+ 1/c}{2}dx\Big\}
\\&\leq \exp\Big\{-c n\log n \frac{|A|}{n} \frac{\nu_0+1/c}{2}\Big\} 
=\exp\Big\{- |A|\log n\frac{c \nu_0+1}{2}\Big\}=n^{-|A|(1+\delta)},
}
for some appropriate $\delta>0$.
Finally
\eqn{
\sum_{A: |A|<\varepsilon n}\P(C_A) \leq \sum_{i=1}^{\varepsilon n} {n \choose i}n^{-i(1+\delta)}\leq \sum_{i=1}^{\varepsilon n}n^{-i \delta} \to 0.
}
\qed

\subsection{No multiple giants}

Next, we prove that for every $\varepsilon >0$, there cannot be a set of vertices of size at least $\varepsilon n$ that is not connected to its complementary.

\begin{prop}\label{prop:nolarge}
Consider a sequence of graphs $G_n(W,t)$, with an irreducible kernel $W(x,y)\in \mathbb L_q([0,1]^2)$ for some $q>2$ and $t_n= c n \log n$ with $c \nu_0>1$. Then for every $\varepsilon >0$, 

\eqn{
\P\Big(\bigcup_{ A : \varepsilon n<|A|<n/2} C_A\Big) \to 0.
}
\end{prop}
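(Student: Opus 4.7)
\emph{Proof strategy.} The plan is to promote the irreducibility hypothesis to a uniform Cheeger-type lower bound on the boundary integral, and then close with a crude union bound exploiting the super-logarithmic edge intensity $t_n=cn\log n$.

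The key intermediate result I would establish is the following: for every $\varepsilon\in(0,1/2]$ there exists $\eta=\eta(\varepsilon)>0$ such that, for every measurable $B\subseteq[0,1]$ with $\mu(B)\in[\varepsilon,1-\varepsilon]$,
\eqn{\label{eq:cheeger}
\int_{B\times B^c}W(x,y)\,dx\,dy\geq \eta.
}
I would prove \eqref{eq:cheeger} by contradiction and weak compactness. If the infimum vanishes, pick a minimizing sequence $(B_n)$ and pass to a weakly convergent subsequence $\indic_{B_n}\rightharpoonup f$ in $L^2([0,1])$; automatically $f\in[0,1]$ a.e.\ and $\int f\in[\varepsilon,1-\varepsilon]$. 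Since $W\in\mathbb L_q$ with $q>2$ forces $W\in L^2([0,1]^2)$, the integral operator $T_W g(x):=\int_0^1 W(x,y)g(y)\,dy$ is Hilbert--Schmidt and hence compact on $L^2$. Compactness turns the weak convergence $1-\indic_{B_n}\rightharpoonup 1-f$ into strong $L^2$ convergence $T_W(1-\indic_{B_n})\to T_W(1-f)$; combined with $\indic_{B_n}\rightharpoonup f$ in the $L^2$ inner product this yields $\int W(x,y)f(x)(1-f(y))\,dx\,dy=0$. Symmetrizing in $x,y$ via $W(x,y)=W(y,x)$ gives
\eqn{
\int W(x,y)\bigl[f(x)+f(y)-2f(x)f(y)\bigr]\,dx\,dy=0.
}
The bracketed quantity is nonnegative and vanishes only where $f(x)=f(y)\in\{0,1\}$; irreducibility then forces $f$ to coincide a.e.\ with an indicator $\indic_{A^*}$ (otherwise $W$ would vanish on a rectangle supported on the "fractional" set, itself contradicting irreducibility), and then $\mu(A^*)\in[\varepsilon,1-\varepsilon]$ together with $\int_{A^*\times(A^*)^c}W=0$ contradicts irreducibility of $W$.

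Given \eqref{eq:cheeger}, the proposition follows by a single union bound. Any $A\subseteq[n]$ with $\varepsilon n<|A|<n/2$ satisfies $\mu(B_A)\in(\varepsilon,1/2)$, so by \eqref{eq:PCA} and the lemma,
\eqn{
\P(C_A)\leq \exp\{-\eta t_n\}=n^{-\eta c n},
}
and summing over the at most $2^n$ such sets gives $\P\bigl(\bigcup_{A:\varepsilon n<|A|<n/2}C_A\bigr)\leq 2^n\cdot n^{-\eta c n}\to 0$.

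The main obstacle is \eqref{eq:cheeger}: irreducibility is a statement about individual sets, whereas one needs a uniform lower bound over the uncountable family $\{B:\mu(B)\in[\varepsilon,1-\varepsilon]\}$, which is not compact in $L^1$. The functional-analytic input -- Hilbert--Schmidt compactness of $T_W$, precisely what the integrability hypothesis on $W$ buys -- is what allows the weak-limit argument above to turn weak convergence of indicators into convergence of the bilinear boundary integral. Everything after that is slack, since a per-set probability $n^{-\Theta(n)}$ easily dominates the combinatorial factor $2^n$ and there is no need for any finer analysis of the structure of $A$.
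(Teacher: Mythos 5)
Your proof is correct and follows the same overall strategy as the paper: reduce the proposition to a uniform lower bound $\delta(W,\varepsilon)>0$ on $\int_{B\times B^c}W(x,y)\,dx\,dy$ over all measurable $B$ with $\mu(B)\in[\varepsilon,1/2]$, and then dispose of the at most $2^n$ sets $A$ by a union bound, since each satisfies $\P(C_A)\le \e^{-t_n\delta}=n^{-c\delta n}$, which crushes the entropy factor $2^n$. The only genuine difference is that the paper imports the uniform cut bound from \cite[Lemma 7]{BolBorCha10} (with a footnote reducing the unbounded case to the bounded one by truncation and domination), whereas you prove it from scratch. Your weak-compactness argument is sound: $W\in\mathbb L_q([0,1]^2)$ with $q>2$ gives $W\in\mathbb L_2$, so $T_W$ is Hilbert--Schmidt and hence compact, which upgrades $\indic_{B_n}\rightharpoonup f$ to convergence of the bilinear cut functional; the pointwise analysis of $f(x)+f(y)-2f(x)f(y)$ correctly shows that this symmetrized integrand vanishes only where $f(x)=f(y)\in\{0,1\}$, so that either the ``fractional'' set $\{0<f<1\}$ has positive measure and $W$ vanishes on a product set violating irreducibility, or $f$ is an indicator $\indic_{A^*}$ with $\mu(A^*)\in[\varepsilon,1-\varepsilon]$ and zero cut, again contradicting irreducibility. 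This makes the argument self-contained at the cost of a page of functional analysis; note that only $W\in\mathbb L_2$ is used at this step, the full strength of $q>2$ being needed only in Proposition \ref{prop:nosmall}.
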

\proof

Recall the definitions of $B_A$ and $C_A$ given at the beginning of Section \ref{sect:connp}. Even when $|A|> \varepsilon n$, the first equality in \eqref{eq:PCA} applies. By definition $\mu (B_A) = |A|/n$ .
By \cite[Lemma 7]{BolBorCha10}\footnote{ The result is originally proved for bounded kernels, but if \eqref{eq:mincut} holds for the kernel $W'(x,y):=\max\{W(x,y),1\}$ it holds also for $W(x,y)$ by domination, and $W'(x,y)$ is irreducible if and only if $W(x,y)$ is irreducible.}, we know that for every $\varepsilon >0$, if $W(x,y)$ is irreducible,

\eqn{\label{eq:mincut}
\inf_{B: \varepsilon \leq \mu(B)\leq 1/2)}\int_{B \times B^c}W(x,y)dxdy=\delta (W, \varepsilon)>0,
}
so that, by \eqref{eq:PCA}

\eqn{
\max_{A: \varepsilon n \leq |A|\leq n/2)}\P(C_A)\leq \sup_{B: \varepsilon \leq \mu(B)\leq 1/2)}\exp\Big\{-t_n\int_{B \times B^c}W(x,y)dxdy\Big\}\leq \e^{-t_n\delta (W, \varepsilon)}.
}
Thus, we bound using again the first moment method

\eqan{
\P\Big(\bigcup_{ A : \varepsilon n<|A|<n/2} C_A\Big)&\leq \sum_{A:\varepsilon n<|A|\leq n/2}\P(C_A) \leq 2^n \e^{-t_n\delta (W, \varepsilon)}\\&=\e^{-n (c \delta (W, \varepsilon) \log n-\log 2)}\to 0. \nn\qed
}

We can finally use all the results we obtained to prove Theorem \ref{thm:main}.

\begin{proof}[Proof of Theorem \ref{thm:main}]

We know that

\eqn{
\P(G_n(W,t) \text{ is connected}) \leq \P(Y_n=0),
}
so by \eqref{eq:noiso} it follows that if $c \leq 1/\nu_0$, then $\P(G_n(W,t) \text{ is connected})\to 0$.

On the other hand, for $G_n(W,t)$ to be disconnected, there must exist a set $A$ of at most $n/2$ vertices such that $C_A$ happens. By Propositions \ref{prop:nosmall} ad \ref{prop:nolarge}, ve obtain that for $c>1/\nu_0$,

\eqn{
\P\Big(\bigcup_{ A : |A|\leq n/2} C_A\Big)\leq \P\Big(\bigcup_{ A : |A|<\varepsilon n} C_A\Big) +\P\Big(\bigcup_{ A : \varepsilon n<|A|\leq n/2} C_A\Big) \to 0,
}
so the claim follows.
\end{proof}
	\section*{Acknowledgments}

The work in this paper is supported by the European Research Council (ERC) through Starting Grant Random Graph, Geometry and Convergence 639046. The author would like to thank Agelos Georgakopoulos for introducing him to the model and Christoforos Panagiotis for the help with some analytic details of the proofs.
\begin{small}
\bibliographystyle{abbrv}
\bibliography{LorenzosBib}
\end{small}

\end{document}